\documentclass[12pt]{amsart}
\usepackage{amscd}
\usepackage{amssymb}
\usepackage{a4wide}
\usepackage{amstext}
\usepackage{amsthm}
\usepackage{mathrsfs}
\usepackage[final]{hyperref}
\hypersetup{unicode= false, colorlinks=true, linkcolor=blue,
anchorcolor=blue, citecolor=green, filecolor=red, menucolor=blue,
pagecolor=blue, urlcolor=blue} \linespread{1.2}

 \DeclareMathOperator{\dist}{dist}

\DeclareMathOperator{\card}{card}
\DeclareMathOperator{\clos}{Clos}
\DeclareMathOperator{\lin}{Lin}

\renewcommand{\phi}{\varphi}

\newcommand{\pw}{\mathcal{P}W_\pi}

\newtheorem{Thm}{Theorem}
\newtheorem{theorem}[Thm]{Theorem}

\begin{document}
\sloppy
\title[A restricted shift completeness problem]{A restricted shift completeness problem}
\author{Anton Baranov, Yurii Belov, Alexander Borichev}
\address{Anton Baranov, 
\newline
Department of Mathematics and Mechanics,
St. Petersburg State University,\newline
St. Petersburg, Russia
\newline {\tt anton.d.baranov@gmail.com}
\newline\newline \phantom{x}\,\, Yurii Belov,
\newline 
Chebyshev Laboratory,
St. Petersburg State University,
St. Petersburg, Russia
\newline {\tt j\_b\_juri\_belov@mail.ru}
\newline\newline \phantom{x}\,\, Alexander Borichev, 
\newline Laboratoire d'Analyse, Topologie, Probabilit\'es, 
Aix--Marseille Universit\'e,\hfill\hfill\newline Marseille, France
\newline {\tt borichev@cmi.univ-mrs.fr}
}
\thanks{The first and the second authors were supported by the Chebyshev Laboratory 
(St. Petersburg State University) under RF Government grant 11.G34.31.0026.
The first author was partially supported by RFBR grant 11-01-00584-a 
and by Federal Program of Ministry of Education 2010-1.1-111-128-033.
The research of the third author was partially supported by the ANR grant FRAB}

\begin{abstract}
We solve a problem about the orthogonal complement  of the space spanned by restricted shifts of 
functions in $L^2[0,1]$
posed by M.Carlsson and C.Sundberg.
\end{abstract}

\maketitle

Recently, Marcus Carlsson and  Carl Sundberg posed the following problem. 
Let $f\in L^2[0,1]$. 
Consider the Fourier transform  
$$
\hat{f} (\lambda) = \int_0^1f(x)e^{i\lambda x}dx
$$
of $f$ and assume that the zeros of the entire function $\hat{f}$ 
are simple. Denote the set of these zeros by $\Lambda$. Suppose that ${\rm conv} ({\rm supp}\, f) = [0,1/2]$, 
and put 
$$
\mathfrak{A}_f = \clos_{L^2[0,1]}\lin\{\tau_t f: 0\leq t \leq 1/2 \},
$$
where $\tau_tf(x)=f(x-t)$. It is clear that 
$\{e^{i\lambda x}\}_{\lambda\in\Lambda}\perp \mathfrak{A}_f$ in $L^2[0,1]$. 
The problem by Carlsson and Sundberg is whether the family 
$$
\{e^{i\lambda x}\}_{\lambda\in\Lambda}\cup \{\tau_tf\}_{0\leq t\leq 1/2}
$$
is complete in $L^2[0,1]$. In this article we solve (a slightly more general form of) this problem. Our solution involves two components: 
a non-harmonic Fourier analysis in the Paley--Wiener space 
developed recently in \cite{bbb}, and sharp density results of 
Beurling--Malliavin type from \cite{pm,mp}.

\begin{theorem}\label{TA}
Let $0<a<1$, $f\in L^2[0,1]$, and let ${\rm conv} ({\rm supp}\, f) = [0,a]$. Denote by $\Lambda=\{(\lambda_k,n_k)\}$ the zero divisor of $\hat{f}$ {\rm(}i.e., $\hat{f}$ vanishes at $\lambda_k$ with 
multiplicity $n_k${\rm)}. Then the family 
$$
\{x^se^{i\lambda_k x}\}_{(\lambda_k,n_k)\in\Lambda,\,0\le s<n_k}\cup\{\tau_tf\}_{0\leq t\leq 1-a}
$$
is complete in $L^2[0,1]$.
\end{theorem}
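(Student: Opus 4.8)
The plan is to pass to the Fourier side and prove that the annihilator of the whole family is trivial. For $g\in L^2[0,1]$ put $G=\hat g$; together with $F=\hat f$ these are entire functions of exponential type lying in the Paley--Wiener space of $[0,1]$. Assume $g$ is orthogonal to every function in the family; the goal is $G\equiv 0$. Since $\overline{e^{i\lambda_k x}}=e^{i(-\bar\lambda_k)x}$, the relations $\langle g,x^se^{i\lambda_k x}\rangle=0$ for $0\le s<n_k$ read $G^{(s)}(-\bar\lambda_k)=0$ for $0\le s<n_k$; that is, $G$ vanishes at $-\bar\lambda_k$ to order at least $n_k$. Now ${\rm conv}\,\supp f=[0,a]$ forces $\widehat{\bar f}(\lambda)=\overline{F(-\bar\lambda)}$ to lie in $PW_{[0,a]}$ with zero divisor exactly $\{(-\bar\lambda_k,n_k)\}$; hence orthogonality to the exponential part is equivalent to the factorization $G=\widehat{\bar f}\cdot\Phi$ with $\Phi$ entire. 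A Titchmarsh support argument, using that $0$ and $a$ are genuine endpoints of $\supp f$, then places $\Phi$ in $PW_{[0,1-a]}$; equivalently $g=\bar f*\phi$ with $\phi$ supported in $[0,1-a]$. Thus the annihilator of the exponential part is precisely $\widehat{\bar f}\cdot PW_{[0,1-a]}$, and the theorem reduces to showing that no nonzero $\Phi\in PW_{[0,1-a]}$ survives the orthogonality to the translates.

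Next I would record what orthogonality to $\{\tau_t f\}$ says. The cross-correlation $h=g*\widetilde{\bar f}$, where $\widetilde{\bar f}(x)=\bar f(-x)$, satisfies $h(t)=\langle g,\tau_t f\rangle$, is supported in $[-a,1]$, and the hypothesis forces $h\equiv 0$ on $[0,1-a]$; hence $h$ is supported in $[-a,0]\cup[1-a,1]$. On the Fourier side $\hat h(\lambda)=\widehat{\bar f}(\lambda)\,\widehat{\bar f}(-\lambda)\,\Phi(\lambda)$. The degenerate configurations are eliminated by a second Titchmarsh argument: if $\sup\supp g\le 1-a$, then $\sup\supp h=\sup\supp g$ lies inside the vanishing interval $[0,1-a]$, contradicting that this is a genuine edge of $\supp h$ unless $g=0$; symmetrically at the other end. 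So the whole difficulty lies in the generic case, where $g$ meets both endpoints of $[0,1]$.

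The crux is then the injectivity of the truncated convolution $S\colon\phi\mapsto (c*\phi)\vert_{[0,1-a]}$ on $L^2[0,1-a]$, where $c$ (supported in $[-a,a]$) is the inverse Fourier transform of $\widehat{\bar f}(\lambda)\widehat{\bar f}(-\lambda)$. The essential feature is that this is a \emph{critically determined} problem: $\phi$ ranges over the interval $[0,1-a]$ of length $1-a$, while the vanishing of $h$ on $[0,1-a]$ imposes exactly the same amount, $1-a$, of constraints. Equivalently, the zero set $\Lambda$ sits precisely at the Beurling--Malliavin threshold---the exponentials $\{x^se^{i\lambda_k x}\}$ are complete in $L^2[0,a]$ but in no longer interval (their density corresponds to length $a$), the translates contribute exactly the missing length $1-a$, and $a+(1-a)=1$. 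At this borderline there is no room to spare, so soft perturbative arguments are useless. Here I would invoke the two imported ingredients: the non-harmonic Fourier analysis of \cite{bbb} to recast the surviving condition on $\Phi$ as a Paley--Wiener vanishing/interpolation problem attached to the generating function $\widehat{\bar f}$, and the sharp density results of Beurling--Malliavin type from \cite{pm,mp} to conclude that at the critical length the only solution is $\Phi\equiv 0$, whence $g=0$.

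The main obstacle is exactly this criticality. Because we sit at the precise completeness radius, the argument cannot tolerate the $\eps$ of slack that makes ordinary density estimates routine; one must use the sharp form of the density theorems rather than an $\eps$-room version. The second delicate point, which the machinery of \cite{bbb} is designed to control, is the division step: showing that $\Phi=G/\widehat{\bar f}$ genuinely belongs to $PW_{[0,1-a]}$ with quantitative control, rather than merely being a ratio of two entire functions, so that the correlation identity $\hat h=\widehat{\bar f}(\lambda)\widehat{\bar f}(-\lambda)\Phi(\lambda)$ can be legitimately combined with the sharp density input to force $\Phi\equiv 0$.
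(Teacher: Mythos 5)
Your reduction breaks at the division step, and the place where you notice this (``the second delicate point'') is not a technicality that \cite{bbb} dispatches --- it is a claim that is unjustified and, as stated, very likely false. If $G$ vanishes on the divisor of $\widehat{\bar f}$, then $\Phi=G/\widehat{\bar f}$ is entire, and since both functions are of Cartwright class and $\widehat{\bar f}$ has completely regular growth, $\Phi$ is of exponential type with indicator diagram inside the expected interval of length $1-a$. But nothing forces $\Phi\in L^2(\mathbb{R})$, nor even polynomial growth on $\mathbb{R}$: $\widehat{\bar f}$ may be extremely small on the real axis away from its zeros, and $G$ vanishing on the divisor does not make $G$ correspondingly small there. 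In particular $\Phi$ need not be the Fourier transform of any compactly supported distribution, so ``a Titchmarsh support argument'' has nothing to act on; your identification of the annihilator of the exponential part with $\widehat{\bar f}\cdot \mathcal{P}W_{[0,1-a]}$ (even before taking closures, which you also ignore) therefore fails, and the reduction to injectivity of the truncated convolution $\phi\mapsto (c*\phi)|_{[0,1-a]}$ on $L^2[0,1-a]$ may miss annihilating vectors. It is instructive that the paper never makes such a claim: it writes $h=FS$ with $S$ merely \emph{entire}, recovers the indicator diagram of $S$ indirectly --- from the residue identity $V_\gamma(n+\gamma)=(-1)^n|a_{n,\gamma}|^2$ for $V_\gamma=ST_\gamma$ together with the decomposition $V_\gamma=Q_\gamma+\sin[\pi(z-\gamma)]R_\gamma$ with $R_\gamma$ of zero type --- and then uses only $|S^*|=|S|$ on $\mathbb{R}$ to stay inside $\pw$, never that $S$ itself lies in a Paley--Wiener space.

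The second, larger gap is that the heart of the proof is replaced by an appeal: ``invoke \cite{bbb} \dots and \cite{pm,mp} \dots to conclude.'' Neither reference applies off the shelf to a truncated-convolution injectivity statement, and the whole content of the paper is the mechanism that brings the problem into a form where they do apply: expand the hypothetical annihilator $h$ in the reproducing-kernel bases $K_{n+\gamma}$; convert the two orthogonality conditions into interpolation identities for $S$ and $T_\gamma$; symmetrize so that $S$, $T_\gamma$ are real on $\mathbb{R}$; extract from the sign alternation a zero of $V_\gamma$ in every interval $(n+\gamma,n+1+\gamma)$; build the zero subsets $\Sigma,\Sigma_1\subset\mathcal{Z}(S)$; run the trichotomy on $R_\gamma$ (nonzero polynomial, $R=0$ via Proposition 3.1 of \cite{bbb}, transcendental yielding a zero-type $R_1$ bounded on $\Sigma$); establish the counting-function equation $\pi n_\Sigma(x)=\pi(1-a)x+\tilde u+v+\alpha$; and only then apply the Mitkovski--Poltoratski solution of the Polya problem to produce a long sequence of intervals on which $\Sigma$ is sparse, contradicting that equation via (the proof of) Theorem 5.9 of Makarov--Poltoratski. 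You correctly diagnose the criticality ($a+(1-a)=1$, no $\eps$ of slack forbids soft density arguments) and you name the right external ingredients, but the bridge from your $\Phi$ to those theorems --- the dual expansion, the sign-alternating $V$, the set $\Sigma$ and its density equation --- is entirely absent, so the proposal is a strategy outline rather than a proof, and its one concrete structural claim (the factorized annihilator) is the step that cannot be made as stated.
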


However, in the limit case $a=1$ it is easy to show that the statement is not true.

\begin{theorem}\label{TB}
There exists $f\in L^2[0,1]$ such that 
${\rm conv} ({\rm supp}\, f) = [0,1]$, 
$\hat{f}$ has only simple zeros which form a set $\Lambda\subset\mathbb R$, 
and the family 
$$
\{e^{i\lambda x}\}_{\lambda\in \Lambda} \cup \{f\}
$$
is not complete in $L^2[0,1]$.
\end{theorem}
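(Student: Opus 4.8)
The plan is to produce an explicit $f$ and to reduce the incompleteness assertion to a codimension count. The key observation is that adjoining a single vector to a family lowers the codimension of its closed span by at most one; hence it suffices to exhibit $f$ for which the exponential part $\{e^{i\lambda x}\}_{\lambda\in\Lambda}$ alone already spans a subspace of $L^2[0,1]$ of codimension at least two. Then the closed span of $\{e^{i\lambda x}\}_{\lambda\in\Lambda}\cup\{f\}$ still has codimension at least one, so the family is not complete.

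Concretely, I would take $f$ through its Fourier transform,
\[
\hat f(\lambda)=\frac{e^{i\lambda}-1}{\lambda(\lambda-2\pi)(\lambda+2\pi)},
\]
so that $\hat f$ is, up to the two extra linear factors, the transform $\widehat{\chi_{[0,1]}}(\lambda)=(e^{i\lambda}-1)/(i\lambda)$ of the indicator of $[0,1]$. First I would check the hypotheses of the theorem. The numerator vanishes exactly on $2\pi\Z$, and the three zeros $\lambda=0,\pm2\pi$ are cancelled by the denominator, leaving $\hat f$ entire with zero set $\Lambda=\{2\pi n:\ n\in\Z,\ |n|\ge2\}$, all of whose points are simple and real. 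Since $|e^{i\lambda}-1|\le 2\,e^{\max(0,-\im\lambda)}$ and $\hat f(\lambda)=O(|\lambda|^{-3})$ on $\RR$, the function $\hat f$ is entire, lies in $L^2(\RR)$, and has indicator diagram $[0,1]$; by the Paley--Wiener theorem $f\in L^2[0,1]$. Inspecting the growth of $\hat f$ and of $e^{-i\lambda}\hat f$ along the imaginary axis shows that the exponential type is genuinely attained at both ends, so that $\mathrm{conv}(\mathrm{supp}\,f)=[0,1]$.

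It remains to compute the codimension. Because $\Lambda=\{2\pi n:|n|\ge2\}$, the family $\{e^{i\lambda x}\}_{\lambda\in\Lambda}$ is exactly the subfamily $\{e^{2\pi inx}\}_{|n|\ge2}$ of the orthonormal basis $\{e^{2\pi inx}\}_{n\in\Z}$ of $L^2[0,1]$. Hence its orthogonal complement is $\lin\{1,e^{2\pi ix},e^{-2\pi ix}\}$, which is three-dimensional. Consequently any $g$ in this three-dimensional space satisfying the single linear constraint $\langle g,f\rangle=0$ is orthogonal to every member of $\{e^{i\lambda x}\}_{\lambda\in\Lambda}\cup\{f\}$, and such a nonzero $g$ exists. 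This proves incompleteness. One may also note that $\Lambda=-\Lambda$, consistent with the orthogonality $\{e^{i\lambda x}\}_{\lambda\in\Lambda}\perp\mathfrak{A}_f$ from the introduction, although this is not needed for the argument.

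The proof is elementary and involves no serious obstacle; the only point requiring care is the simultaneous verification of all the hypotheses---that the zeros are simple and real and that $\mathrm{conv}(\mathrm{supp}\,f)$ is exactly $[0,1]$---while keeping the deficiency of the exponential system at least two. Any arithmetic-progression zero set with three deleted frequencies works equally well; the choice above simply realizes the smallest symmetric such deletion.
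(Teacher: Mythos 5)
Your proof is correct, and it rests on the same mechanism as the paper's: arrange for the zero set of $\hat f$ to be $2\pi\Z$ with part deleted, so that the exponential system is a proper subfamily of the orthonormal basis $\{e^{2\pi i n x}\}_{n\in\Z}$ of $L^2[0,1]$ with deficiency greater than one, and then observe that adjoining the single vector $f$ lowers the codimension of the closed span by at most one. The difference is in the realization. The paper deletes a \emph{lacunary infinite} subset $\Lambda_1=\{\pi 2^n\}_{n\ge 1}$ of $2\pi\Z$, dividing $e^{iz/2}\sin(z/2)$ by a zero-order entire function $F$ with zero set $\Lambda_1$; this makes $L^2[0,1]\ominus\{e^{i\lambda x}\}_{\lambda\in\Lambda}$ infinite-dimensional, so the incompleteness is as strong as possible (robust even under adjoining any finite family of functions), but verifying that $G=e^{iz/2}\sin(z/2)/F(z)$ lies in $\widehat{L^2[0,1]}$ requires growth estimates on $1/F$ along the real line, which the paper leaves implicit. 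You instead delete the minimal symmetric set $\{0,\pm 2\pi\}$ by dividing by the polynomial $\lambda(\lambda-2\pi)(\lambda+2\pi)$, which keeps everything explicit: one can even compute $f(x)=\frac{i}{4\pi^2}(\cos 2\pi x-1)\chi_{[0,1]}(x)$ by partial fractions, from which $\supp f=[0,1]$, the simplicity and reality of the zeros, and membership in $L^2[0,1]$ are immediate, with no Phragm\'en--Lindel\"of or infinite-product estimates needed. Your check that the type is attained at both ends of the imaginary axis (polynomial decay of $\hat f(iy)$ as $y\to+\infty$, growth $e^{y}/y^3$ as $y\to-\infty$) correctly pins down ${\rm conv}({\rm supp}\,f)=[0,1]$. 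So your argument is a more elementary instance of the paper's construction, trading the paper's infinite defect for complete explicitness; both are valid proofs of the theorem as stated.
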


\begin{proof}[Proof of Theorem~\ref{TA}]
We apply the Fourier transform and a simple rescaling 
to reduce our problem to the following one. Let $F$ belong to 
the Paley--Wiener space $\mathcal{P}W_{\pi a}$ (the Fourier image 
of $L^2[-\pi a, \pi a]$), and let $\Lambda=\{(\lambda_k,n_k)\}$ be the zero divisor of $F$.
Then the family 
\begin{equation}
\label{prob}
\{F(z)e^{itz}\}_{|t|\leq \pi(1-a)}\cup \{K^s_\lambda\}_{(\lambda_k,n_k)\in\Lambda,\,0\le s<n_k}
\end{equation}
is complete in $\pw$. Here, 
$K^0_\lambda(z) = K_\lambda(z) = 
\frac{\sin [\pi (z-\overline \lambda)]}{\pi(z-\overline 
\lambda)}$ is the reproducing kernel of the space $\pw$, and 
$$
K^s_\lambda=\Bigl(\frac{d}{d\overline{\lambda}}\Bigr)^s K_\lambda
$$
reproduce the $s$-th derivatives:
$$ 
\langle f,K^s_\lambda \rangle_{\pw}=f^{(s)}(\lambda),\qquad 
f\in\pw,\,
\lambda\in\mathbb C,\, s\ge 0. 
$$
It is easy to show that for every $\beta\in\mathbb{R}$, the functions 
$$
F(z)\frac{\sin[\pi(1-a)(z-\beta)]}{z-\beta-2n(1-a)^{-1}}, 
\qquad n\in\mathbb{Z},
$$
belong to the closed linear span of 
$\{F(z)e^{itz}\}_{|t|\leq \pi(1-a)}$ in $\pw$. 
We set $G(z) = F(z)\sin[\pi(1-a)(z-\beta)]$, and 
fix $\beta$ in such a way that $G$ has only simple zeros. 
Denote $\Lambda' = \{\beta+\frac{2n}{1-a}\}_{n\in\mathbb{Z}}$.
It remains to verify that the family 
$$
\biggl{\{}\frac{G(z)}{z-\lambda}\biggr{\}}_{\lambda\in \Lambda'}
\cup\{K^s_{\lambda_k}\}_{(\lambda_k,n_k)\in\Lambda,\,0\le s<n_k}
$$ 
is complete in $\pw$. 

Assume the converse. Then there exists $h\in\pw\setminus\{0\}$ such that
\begin{equation}
\label{1perp}
\biggl{(}\frac{G(z)}{z-\lambda},h\biggr{)} = 0,\qquad \lambda\in \Lambda',
\end{equation}
\begin{equation}
\label{2perp}
(h,K^s_\lambda) = 0,\qquad (\lambda_k,n_k)\in\Lambda,\,0\le s<n_k.
\end{equation}

For $0\leq\gamma<1$, we expand $h$ with respect to the 
orthogonal basis $K_{n+\gamma}$:
$$
h=\sum_{n\in\mathbb{Z}}\bar{a}_{n,\gamma}K_{n+\gamma},\qquad \{a_{n,\gamma}\}\in \ell^2.
$$
Then \eqref{1perp}--\eqref{2perp} can be rewritten as 
\begin{align*}
\sum_{n\in\mathbb{Z}}\frac{a_{n,\gamma}G(n+\gamma)}
{n+\gamma-\lambda} &= 0,\qquad \lambda \in \Lambda',\\
\sum_{n\in\mathbb{Z}}\frac{\bar{a}_{n,\gamma}(-1)^n}
{(n+\gamma-\lambda_k)^s} &= 0, \qquad (\lambda_k,n_k)\in\Lambda,\,0<s\le n_k.
\end{align*}
Changing $\gamma$ if necessary 
we can assume that $a_{n,\gamma}\neq 0$, $G(n+\gamma)\not = 0$, 
$n\in\mathbb Z$.
Therefore there exist entire functions $S_\gamma$ and $T_\gamma$ such that 
\begin{align}
\label{1ef}
\sum_{n\in\mathbb{Z}}\frac{a_{n,\gamma}G(n+\gamma)}{n+\gamma - z} 
&= \frac{T_\gamma(z)\sin[\pi(1-a)(z-\beta)]}{\sin [\pi(z-\gamma)]},\\
\label{2ef}
\sum_{n\in\mathbb{Z}}\frac{\bar{a}_{n,\gamma}(-1)^n}{n+\gamma-z} 
&= \frac{S_\gamma(z)F(z)}{\sin [\pi(z-\gamma)]}=
\frac{h(z)}{\sin [\pi(z-\gamma)]}.
\end{align}
Since $h=FS_\gamma$ does not depend on $\gamma$, we write in what follows 
$S=S_\gamma$. 

Put $V_\gamma=S T_\gamma$. Comparing the residues 
in equations \eqref{1ef}--\eqref{2ef} at the points 
$n+\gamma$, $n\in\mathbb{Z}$, we conclude that
\begin{equation}
\label{altern}
V_\gamma(n+\gamma)=(-1)^n|a_{n,\gamma}|^2,\quad n\in\mathbb{Z}.
\end{equation}
By construction, $V_\gamma$ is of at most exponential type $\pi$. Therefore, we have the representation  
\begin{equation}
\label{Tfunc}
V_\gamma(z)= Q_\gamma(z) + \sin [\pi ( z - \gamma)] R_\gamma(z),
\end{equation}
where
$$
Q_\gamma(z) = \sin\pi (z - \gamma) \sum_{n\in\mathbb{Z}}
\frac{|a_{n,\gamma}|^2}{z - n - \gamma},
$$
and $R_\gamma$ is a function of zero exponential type. Thus, 
the conjugate indicator diagram of $V_\gamma$ is $[-\pi,\pi]$, and hence, the conjugate indicator diagram of 
$T_\gamma$ and $S$ are $[-\pi a, \pi a]$ and 
$[-\pi(1-a),\pi(1-a)]$ correspondingly. 
Therefore, each of the functions $V_\gamma^*/V_\gamma$, 
$T_\gamma^*/T_\gamma$, and $S^{*}/S$ is a ratio of two Blaschke 
products. Here we use the notation $H^*(z)=\overline{H(\bar{z})}$. 

It follows from \eqref{2ef} that
$$
\frac{S(z) F(z)}{\sin [\pi(z - \gamma)]}\cdot\frac{S^{*}(z)}{S(z)} 
= \sum_{n\in\mathbb{Z}}\frac{\bar{a}_{n,\gamma}(-1)^n}{n+\gamma-z}
\cdot\frac{S^{*}(n+\gamma)}{S(n +\gamma)} + H(z)
$$
for some entire function $H$. Since $FS^{*}\in\pw$, 
we conclude that $H$ is of zero exponential type and  
tends to $0$ along the imaginary axis. Thus, $H=0$.

We set $\bar{b}_{n,\gamma}=\bar{a}_{n,\gamma}\frac{S^{*}(n+\gamma)}{S(n +\gamma)}$, 
and obtain
$$
\sum_{n\in\mathbb{Z}}\frac{\bar{b}_{n,\gamma}(-1)^n}{n+\gamma-z} = 
\frac{S^{*}(z)F(z)}{\sin \pi(z-\gamma)}.
$$
Analogously, using the fact that the function $z \mapsto T_\gamma(z)\sin[\pi(1-a)(z-\beta)]$ 
belongs to $\pw$ and the fact that $ST_\gamma$ is real on $\mathbb{Z}+\gamma$, we deduce from \eqref{1ef} that 
$$
\sum_{n\in\mathbb{Z}}\frac{b_{n,\gamma}G(n+\gamma)}{n+\gamma-z} 
= \frac{T_\gamma^* (z)\sin[\pi(1-a)(z-\beta)]}{\sin [\pi(z-\gamma)]}.
$$
Thus, the function
$$
g=\sum_{n\in\mathbb{Z}}\bar{b}_n K_{n+\gamma}
$$ 
is orthogonal to the system \eqref{prob},
whence the elements $h+g$, $ih-ig$ are also orthogonal to \eqref{prob}, 
and correspond to the pairs $(S+S^{*}, T_\gamma+T_\gamma^*)$, 
$(iS-iS^{*}, -iT_\gamma+iT_\gamma^*)$. 
Therefore, from now on we assume that $S$, $T_\gamma$, and hence,  
$V_\gamma$ are real on the real line. 

Now it follows from (\ref{altern}) that the function 
$V_\gamma$ has at least one zero in every interval 
$(n+\gamma, n+1+\gamma)$, $n\in\mathbb{Z}$.
By \eqref{Tfunc}, the zeros of $V_\gamma$ coincide 
with the zeros of the function 
\begin{equation}
\label{type0}
R_\gamma(\lambda)+\sum_{n\in\mathbb{Z}}\frac{|a_{n,\gamma}|^2}{\lambda-n-\gamma}.
\end{equation} 

Next we fix $\gamma\in[0,1)$ and a sufficiently small $\delta>0$   
for which there exist two subsets $\Sigma, \Sigma_1$ of the zero set $\mathcal{Z}(S)$ of the function $S$ with the following properties:
\begin{itemize}
\begin{item}
$\Sigma$ has exactly one point in those intervals 
where $\mathcal{Z}(S)\cap[n+\gamma,n+1 +\gamma)\neq\emptyset$, 
and 
$$
\dist(x,\mathbb{Z}+\gamma)>\frac{\delta}{1+x^2}, \qquad x\in\Sigma;
$$
\end{item}
\begin{item}
$\Sigma_1$ has positive upper density, 
and $\dist(x, \mathbb{Z}+\gamma)>\delta$, $x\in\Sigma_1$.
\end{item}
\end{itemize}

From now on, we use the notations $R=R_\gamma$, 
$a_n=a_{n,\gamma}$, $V=V_\gamma$, $T=T_\gamma$. We need to consider three cases. 
If $R$ is a nonzero polynomial, then the zeros of the 
function \eqref{type0} approach $\mathbb{Z}+\gamma$ and we obtain 
a contradiction to the existence of $\Sigma_1$. 
If $R=0$, then \cite[Proposition 3.1]{bbb} implies that 
the density of $\Sigma_1$ is zero. Finally, if $R$ is not a polynomial, 
we can divide it by $(z-z_1)(z-z_2)$, where $z_1$ and $z_2$ 
are two arbitrary zeros of $R$, $z_1,z_2\not\in\Sigma$, 
to get a function $R_1$ of zero exponential type which is 
bounded on $\Sigma$. 

Next, we obtain some information on $\Sigma$.
For a discrete set $X=\{x_n\} \subset \mathbb{R}$ we 
consider its counting function  
$n_X(t) = {\rm card}\, \{n: x_n \in [0, t)\}$, $t\ge 0$,
and $n_X(t) = -{\rm card}\, \{n: x_n \in (-t, 0)\}$, $t<0$.
If $f$ is an entire function and $X$ is the set of its real zeros
(counted according to multiplicities), then there exists a branch
of the argument of $f$ on the real axis,
which is of the form $\arg f(t) = \pi n_X(t) +\psi(t)$, where $\psi$ 
is a smooth function. Such choice of the argument is unique
up to an additive constant and in what follows we always assume that the argument is chosen
to be of this form.

We use the (easy to show) fact that for every function $f\in\pw$ with the 
conjugate indicator diagram $[-\pi,\pi]$ and all zeros 
in $\overline{\mathbb{C}_+}$, one has
\begin{equation}
\label{argum}
\arg f = \pi x +\tilde{u}+c,
\end{equation}
where $u\in L^1((1+x^2)^{-1}dx)$, $c\in\mathbb{R}$.
Here $\tilde u$ denotes the conjugate function (the Hilbert
transform) of $u$,
$$
\tilde u (x)=\frac{1}{\pi} v.p. 
\int_\mathbb{R} \bigg(\frac{1}{x-t} +\frac{t}{t^2+1}\bigg) u(t)dt.
$$

It follows from \eqref{1ef}--\eqref{2ef} that $FV\in\mathcal{P}W_{\pi a+\pi}$.
Now let us replace all zeros 
$\lambda$ of the functions $h$, $F$, $S$, $T$, and $V$ in $\mathbb{C}_{-}$ 
by $\bar{\lambda}$. 
Since the Paley--Wiener space is closed under division by Blaschke products, we 
still have for the new functions $h$, $F$, $S$, $T$, and $V$  
(which we denote by the same letters) that 
$h\in \pw$ and $FV\in\mathcal{P}W_{\pi a + \pi}$. 
Recall that the function $V$ has at least one zero in each  of the intervals
$(n+\gamma, n+1+\gamma)$, $n \in \mathbb{Z}$.
Let us consider its representation $V=V_0 H$, where the zeros of $V_0$ 
are simple, interlacing with $\mathbb{Z}+\gamma$ 
and $V_0|_\Sigma=0$. It is clear that $\arg V_0 = \pi x+ O(1)$. 
Since, by (\ref{argum}), 
$$
\arg(FV)=  \pi a  x +\pi x + \tilde{u}+c,
$$
we conclude that
$$
\arg(FH)=\pi a x +\tilde{u}+O(1).
$$

Consider the equality $h=FHS/H$ and note that
$$
\arg\Big(\frac{S}{H}\Big)  = \pi n_\Sigma -\alpha,
$$
where $\alpha$ is some nondecreasing function on $\mathbb{R}$. 
This follows from the fact that
$S/H$ vanishes only on a subset of the real axis which contains $\Sigma$ 
and $\frac{S^*H}{SH^*}$ is a Blaschke product.
Applying the representation (\ref{argum}) to $h$, we conclude that 
\begin{equation}
\label{n15}
\pi n_\Sigma(x) = \pi(1-a)x + \tilde{u}+ v + \alpha,
\end{equation}
where $u\in L^1((1+x^2)^{-1}dx)$, $v\in L^\infty(\mathbb{R})$, 
and $\alpha$ is nondecreasing.

Summing up, we have an entire function $R_1$ of zero exponential type which is not a polynomial, and which is bounded 
on a set $\Sigma\subset\mathbb R$ satisfying \eqref{n15}.

To deduce a contradiction from this, we use some information on the classical Polya problem
and on the second Beurling--Malliavin theorem.
We say that a sequence $X=\{x_n\} \subset \mathbb{R}$ is a {\it Polya sequence} 
if any entire function of zero exponential type which is bounded on $X$
is a constant. We say that a disjoint sequence of intervals 
$\{I_n\}$ on the real line is \itshape a long sequence of intervals 
\normalfont if 
$$
\sum_n\frac{|I_n|^2}{1+\dist^2(0,I_n)}=+\infty.
$$

A complete solution of the Polya problem was obtained by 
Mishko Mitkovski and Alexei Poltoratski \cite{mp}. In particular\footnote{
The ``only if'' part of this statement is implicitly contained in the results of Louis de Branges in the 1960-s:
\cite[Theorem XI]{deb1}, \cite[Theorems 66, 67]{deb2}; see also \cite[Remark, page 1068]{mp}.
}, a separated sequence 
$X\subset\mathbb{R}$ is not a Polya sequence if and only 
if there exists a long sequence of intervals $\{I_n\}$ such that 
$$
\frac{\card(X \cap I_n)}{|I_n|}\rightarrow 0.
$$

Applying this result to our $R$ and $\Sigma$ (formally speaking, $\Sigma$ is not a separated sequence but by construction it is a union of two separated
sequences which are interlacing), we find a long system 
of intervals $\{I_n\}$ such that 
$$
\frac{\card(\Sigma\cap I_n)}{|I_n|}\rightarrow0.
$$

Given $I=[a,b]$, denote $I^-=[a,(2a+b)/3]$, $I^+=[(a+2b)/3,b]$, 
$$
\Delta^*_I = \inf_{I^+}[\pi(1-a)x -\pi n_\Sigma(x) +v] -
\sup_{I^-}[\pi(1-a)x - \pi n_\Sigma(x) +v].
$$
Now, for a long system of intervals $\{I_n\}$ and for some $c>0$ 
we have 
$$
\Delta^*_{I_n}\geq c|I_n|.
$$

Next we use a version of the second Beurling--Malliavin theorem 
given by Nikolai Makarov and Alexei Poltoratskii in \cite[Theorem 5.9]{pm}. 
This theorem (or rather its proof)  
gives that 
if the function $\pi(1-a)x - \pi n_\Sigma(x) +v$ may be represented as 
$-\alpha -\tilde u$ for $\alpha$ and $u$ as above, then 
there is no such long family of intervals. This contradiction completes the proof.
\end{proof}

\begin{proof}[Proof of Theorem~\ref{TB}]
Let $\Lambda_1=\{\pi 2^n\}_{n\geq1}$, 
$\Lambda = 2\pi \mathbb{Z}\setminus\Lambda_1$. Consider the entire 
function $F$ of zero order with zero set $\Lambda_1$, and denote $G(z)=e^{iz\slash2}\cdot\frac{\sin(z\slash2)}{F(z)}$. Then $G=\hat{f}$ for some $f\in L^2[0,1]$, and $L^2[0,1]\ominus\{e^{i\lambda x}\}_{\lambda\in\Lambda}$ is of infinite dimension.
\end{proof}

{\bf Acknowledgements.} The authors are thankful to Misha Sodin 
for helpful discussions. A part of the present work was done when the authors participated 
in the workshop "Operator Related Function Theory" organized by Alexandru Aleman and Kristian Seip at the Erwin Schr\"odinger International Institute for Mathematical Physics (ESI). The hospitality of ESI is greatly appreciated.

\end{document}